\theoremstyle{plain}
\newtheorem*{theorem*}{Theorem}
\newtheorem*{prop*}{Proposition}
\newtheorem*{claim*}{Claim}
\newtheorem*{cor*}{Corollary}
\newtheorem*{lemma*}{Lemma}
\theoremstyle{definition}
\newtheorem*{defn*}{Definition}
\newtheorem*{remark*}{Remark}
\newcommand{\qbin}[2]{\binom{#1}{#2}_q}
\newcommand{\C}{\mathbf C}
\newcommand{\dash}{\nobreakdash-}
\newcommand{\inv}{\mathrm{inv}}
\newcommand{\multinom}[2]{\binom #1{\begin{matrix}#2\end{matrix}}}
\newcommand{\Z}{\mathbf Z}
\title{Counting subspaces of a finite vector space}
\author{Amritanshu Prasad}
\subjclass[2000]{05A10, 05A30}
\keywords{Gaussian binomial coefficients, finite vector spaces}
\address{The Institute of Mathematical Sciences, Chennai.}
\begin{document}
\begin{abstract}
  We discuss the relation between the Gaussian binomial and multinomial coefficients and ordinary binomial and multinomial coefficients from a combinatorial viewpoint, based on expositions by Knuth, Stanley and Butler.
\end{abstract}

\maketitle

\subsection{Background}
\label{sec:background}
The Gaussian binomial coefficients were introduced two hundred years ago by Carl Friedrich Gauss \cite{summatio-quarumdam} as a tool to find a formula for the following sums, now called Gauss sums:
\begin{equation*}
  G=\sum_{n=0}^{p-1} e^{2\pi i n^2/p} \quad \text{for all primes } \, p.
\end{equation*}
Gauss had easily computed these sums up to sign in 1801, but it took him four years of intense effort to resolve the sign ambiguity; he solved the problem only in 1805. This aspect of Gaussian binomial coefficients makes for a fascinating story in itself, but we shall simply refer the reader to \cite{BerndtEvans}, where an exposition of Gauss's proof (among other things) can be found.

This article concentrates on a different aspect of Gaussian binomial coefficients: they arise as the answer to a counting problem in linear algebra involving finite fields. Vectors with real coordinates arise naturally in our efforts to describe nature by mathematics. Space is described by three real coordinates, space-time by four. Physicists need to keep track of the position and momentum of a particle in space in order to be able to predict is position in the future, leading to the use of six real coordinates for each particle.

Number theorists like to work with vectors with integer or rational coordinates: prospective solutions to Fermat's famous equation
\begin{equation}
  \label{eq:18}
  x^n + y^n = z^n
\end{equation}
are vectors with three integer or rational coordinates.

Finding out whether integer solutions exist to a given Diophantine equations can be  notoriously difficult, as evidenced by all the fuss over (\ref{eq:18}). The most powerful weapon in the mathematician's arsenal to tackle this problem was also introduced by Gauss: the notion of \emph{modular arithmetic}, which is the arithmetic of remainders.
Four hours after 9 o'clock, it is 1 o'clock, because the remainder when 9+4=13 is divided by 12 is 1.
The clock does arithmetic modulo 12, but one can do it modulo any number $n\geq 2$. In arithmetic modulo $n$, two integers $a$ and $b$ are identified if their remainders after division by $n$ are the same; or, as number theorists say it, their residues modulo $n$ are the same. We write
\begin{equation*}
  a\equiv b \mod n
\end{equation*}
Thus, there are $n$ residue classes of integers modulo $n$, represented by
\begin{equation*}
  0,1,2,\ldots,n-1.
\end{equation*}
Addition and multiplication carry over to modular arithmetic and satisfy the same rules of commutativity, associativity and distributivity that hold for the integers.

When $n$ is a prime number (call it $p$), something special happens: it is possible to divide by any non-zero residue modulo $p$, meaning to say that for any $x$ not divisible by $p$, there exists an integer $y$ such that
\begin{equation*}
  xy\equiv 1\mod p.
\end{equation*}
This integer $y$ can be thought of as the reciprocal of $x$ modulo $p$, and we may write it as $1/x$.
A number system, namely a set with the operations of addition and multiplication, which satisfies the usual axioms of commutativity, associativity and distributivity (it is assumed that the set comes with a \lq $0$\rq{} and a \lq $1$\rq{}), where division by any element different from $0$ is possible, is called a \emph{field}.
For every prime power $q$, \'Evariste Galois (1811--1832) constructed a finite field with $q$ elements \cite{Galois-fields}.
In 1903, Eliakim Hastings Moore showed that for each prime power $q$, all fields with $q$ elements are isomorphic \cite{Moore-finite-fields}.
In other words there is, in essence, just one field with $q$ elements for each prime power $q$.

Vectors with coordinates in any field behave very much like vectors with real numbers as coordinates.
The usual notions of linear independence, basis, and subspace carry over from vectors with real coordinates to vectors with coordinates in finite fields, and will be used freely throughout this article.
We recall the essentials: a set of vectors which is closed under addition and scalar multiplication (a scalar is an element of the underlying field) is called a \emph{linear subspace}. A collection $v_1,\ldots,v_n$ of vectors is said to be \emph{linearly independent} if, for each $1\leq i<n$, there exists a linear subspace which contains $v_1,\ldots,v_{i-1}$ but not $v_i$.
A \emph{basis} of a linear subspace is any maximal linearly independent subset.
Every vector in the subspace can be written as a sum of scalar multiples of elements from a basis.
All bases of a linear subspace have the same number of elements, and this number is called the \emph{dimension} of the linear subspace.

\subsection{Introduction}
\label{sec:introduction}

Let $F$ be a finite field of order $q$.
Let $V=F^n$ be the space of vectors with $n$ coordinates in $F$.
For every $k\leq n$, define $\qbin n k$ as the number of linear subspaces of $V$ of dimension $k$.
The number $\qbin n k$ is called a Gaussian binomial coefficient.
One of the goals of this article is to explore the relationship between $\qbin n k$ and the binomial coefficient $\binom n k$, which is the number of ways of choosing $k$ objects out of $n$ ($n$ choose $k$).

It is not difficult to write down a formula for $\qbin n k$.
To do this, note that a $k$\dash dimensional subspace is specified by giving $k$ linearly independent vectors $\{v_1,\ldots,v_k\}$ in $V$.
In how many ways can this be done?
Firstly, $v_1$ can be taken to be any non-zero vector in $V$.
Therefore there are $q^n-1$ choices for $v_1$.
Given $v_1$, $v_2$ can be chosen to be any vector which is not in the subspace spanned by $v_1$.
Since this subspace has $q$ elements, there are $q^n-q$ choices for $v_2$.
Continuing in this manner, we see that given $v_1,\ldots,v_l$ for $l<k$, there are $q^n-q^l$ choices for $v_{l+1}$.
The number of sets of $k$ linearly independent vectors in $V$ is therefore
\begin{equation*}
  (q^n-1)(q^n-q)\cdots (q^n-q^{k-1}).
\end{equation*}
Applying the above formula to the special case where $n=k$, we see that each $k$\dash dimensional subspace of $V$ (which is a $k$\dash dimensional vector space in its own right) has
\begin{equation*}
  (q^k-1)(q^k-q)\cdots (q^k-q^{k-1})
\end{equation*}
bases.
To obtain the number of $k$\dash dimensional subspaces, we divide the first expression by the second one.
Thus the number of $k$\dash dimensional subspaces of $V$ is
\begin{equation}
  \label{eq:5}
  \qbin n k=\frac{(q^n-1)(q^n-q)\cdots (q^n-q^{k-1})}{(q^k-1)(q^k-q)\cdots (q^k-q^{k-1})}.
\end{equation}
So far, $q$ has been the order of a finite field (which can be any prime power), but the above expression is also a rational function of $q$ with a denominator that vanishes only at $q=1$.
This allows us to get a value for $\qbin n k$ for any complex number $q\neq 1$.
Using L'H\^opital's rule one computes
\begin{equation*}
  \lim_{q\to 1} \frac{q^n-q^i}{q^k-q^i}=\frac{n-i}{k-i}
\end{equation*}
for $i=0,\ldots,n-1$.
Therefore
\begin{equation}
  \label{eq:3}
  \lim_{q\to 1} \qbin n k= \frac{n(n-1)\cdots(n-k+1)}{k(k-1)\cdots 1}=\binom n k.
\end{equation}

\subsection{Counting using echelon form}
\label{sec:echelon}

In 1971, Donald Ervin Knuth provided an elegant explanation for the identity (\ref{eq:3}) in \cite{MR0270933} based on the idea that every subspace has a unique basis which is in reduced row echelon form.

Let $F$ be a field with $q$ elements.
Any $k$ linearly independent vectors in $F^n$ can be arranged into a $k\times n$ matrix of rank $k$, where the entries of the $i$th row are the coordinates of the $i$th vector.
In other words, every $k$\dash dimensional subspace of $F^n$ is the row space of an $k\times n$ matrix of rank $k$.
The row space of such a matrix does not change under the following elementary row operations:
\begin{enumerate}
\item permutation of the rows
\item addition of a scalar multiple of one row to another
\item multiplication of a row by a non-zero scalar
\end{enumerate}
A $k\times n$ matrix is said to be in reduced row echelon form if
\begin{enumerate}
\item the left-most non-zero entry of each row is $1$ (let's call it a \lq\lq leading $1$\rq\rq)
\item all the other entries in the column of a leading $1$ are zero
\item the leading $1$ in any row occurs to the right of the leading 1 in the row above it
\end{enumerate}
A matrix in reduced row echelon form looks something like
\begin{equation*}
  \begin{pmatrix}
    0 \cdots 0 & 1 & * \cdots * & 0 & * \cdots * & 0 & * \cdots *\\
    0 \cdots 0 & 0 & 0 \cdots 0 & 1 & * \cdots * & 0 & * \cdots *\\
    0 \cdots 0 & 0 & 0 \cdots 0 & 0 & 0 \cdots 0 & 1 & * \cdots *\\
    \vdots & \vdots & \vdots & \vdots & \vdots & \vdots & \vdots 
  \end{pmatrix}
\end{equation*}
where the $*$'s represent arbitrary elements of $F$.

The reader should not have much difficulty in seeing that the elementary row operations described above can be used to reduce any $k\times n$ matrix to reduced row echelon form.
If the matrix has rank $k$, then all the rows in the reduced row echelon form are non-zero.
Moreover, if two matrices in reduced row echelon form have the same row space, they are equal.

Given a matrix in reduced row echelon form, delete all the entries in each row to the left of the leading $1$.
Then remove the columns containing the leading $1$'s.
Finally, replace each remaining entry with a $*$.
The result is a pattern of $*$'s inside a grid with $k$ rows and $n-k$ columns.
These patterns are characterized by the property that except for the $*$'s in the first row, every $*$ has a $*$ above it, and except for the $*$'s in the $(n-k)$th column, every $*$ has a $*$ to the right of it.
They are called Ferrers diagrams.
\begin{figure}
  \centering
  \begin{equation*}
    \begin{pmatrix}
      0 & 1 & 1 & 0 & 0 & 1 & 2\\
      0 & 0 & 0 & 1 & 0 & 2 & 0\\
      0 & 0 & 0 & 0 & 1 & 0 & 1
    \end{pmatrix}
    \quad
    \longrightarrow
    \quad
      \begin{xy}
        (0,7.5);
        (5,7.5)**@{.};(10,7.5)**@{.};(15,7.5)**@{.};(20,7.5)**@{.};
        (0,2.5);(5,2.5)**@{.};(10,2.5)**@{.};(15,2.5)**@{.};(20,2.5)**@{.};
        (0,-2.5);(5,-2.5)**@{.};(10,-2.5)**@{.};(15,-2.5)**@{.};(20,-2.5)**@{.};
        (0,-7.5);(5,-7.5)**@{.};(10,-7.5)**@{.};(15,-7.5)**@{.};(20,-7.5)**@{.};
        (0,7.5);(0,2.5)**@{.};(0,-2.5)**@{.};(0,-7.5)**@{.};
        (5,7.5);(5,2.5)**@{.};(5,-2.5)**@{.};(5,-7.5)**@{.};
        (10,7.5);(10,2.5)**@{.};(10,-2.5)**@{.};(10,-7.5)**@{.};
        (15,7.5);(15,2.5)**@{.};(15,-2.5)**@{.};(15,-7.5)**@{.};
        (20,7.5);(20,2.5)**@{.};(20,-2.5)**@{.};(20,-7.5)**@{.};
        (7.5,5)*=0{*};(12.5,5)*=0{*};(17.5,5)*=0{*};
        (12.5,0)*=0{*};(17.5,0)*=0{*};
        (12.5,-5)*=0{*};(17.5,-5)*=0{*};
      \end{xy}
  \end{equation*}
  \caption{From a matrix to its Ferrers diagram}
  \label{fig:2}
\end{figure}
Figure~\ref{fig:2} illustrates an example of this process when $n=7$ and $k=3$.
On the left is a matrix in reduced row echelon form.
The leading $1$'s occur in the second, fourth and fifth columns.
After deleting the entries to the left of the leading $1$'s, deleting the columns with leading $1$'s, and replacing the remaining entries with $*$'s the Ferrers diagram on the right is obtained.

The above process can be easily reversed: let $e_1,\ldots,e_k$ denote the $k$ coordinate vectors in $F^n$, written a columns.
Starting with a Ferrers diagram $\lambda$ in a $k\times (n-k)$ grid, replace each $*$ with an element of $F$ (this can be done in $q^{|\lambda|}$ ways, where $|\lambda|$ denotes the number of $*$'s in $\lambda$) and each vacant square in the grid by a $0$.
Insert $e_1,\ldots,e_k$ in reverse order as follows:
insert $e_k$ to the left of the left-most $*$ in the bottom row.
Having inserted $e_{i+1},\ldots,e_k$, insert $e_i$ to the left of all the $*$'s in the $i$th row and all the columns $e_{i+1},\ldots,e_n$.
This gives all the matrices associated with $\lambda$.

The problem of counting the $k$\dash dimensional subspaces of $F^n$ is equivalent to counting the number of $k\times n$ matrices of rank $k$ in reduced row echelon form.
Each matrix in reduced row echelon form gives rise to a Ferrers diagram inside a $k\times(n-k)$ grid.
Each Ferrers diagram $\lambda$ can be obtained from $q^{|\lambda|}$ matrices in reduced row echelon form.
We have shown that
\begin{equation}
  \label{eq:4}
  \qbin n k = \sum_{\lambda\subset k\times n-k} q^{|\lambda|}
\end{equation}
where $\lambda\subset k\times n-k$ is supposed to indicate that $\lambda$ is a Ferrers diagram in a grid with $k$ rows and $n-k$ columns.

The expression (\ref{eq:4}) for $q$-binomial coefficients demonstrates that $\qbin n k$ is a monic polynomial in $q$ of degree $k(n-k)$ with positive integer coefficients, which is not evident from (\ref{eq:5}).
In the next section, we take a closer look at the right hand side of (\ref{eq:4}) to give, among other things, another proof of (\ref{eq:3}).

A partition of a natural number $n$ is a decomposition
\begin{equation*}
  n=n_1+\cdots+n_k,
\end{equation*}
where each part $n_i$ is positive.
If one does not distinguish between different reorderings of the same summands, one may assume that $n_1\geq \ldots\geq n_k$.
A Ferrers diagram $\lambda$ with $|\lambda|=n$ can be thought of as a partition of $n$, the parts being the number of $*$'s in each row of $\lambda$.
Thus, (\ref{eq:4}) has the following interpretation:
\begin{quote}
  The coefficient of $q^r$ in $\qbin n k$ is the number of partitions of $r$ into no more than $n-k$ parts, with each part being no larger than $k$.
\end{quote}

\subsection{Paths, subsets, and permutations}
\label{sec:paths-subs-perm}

Given a Ferrers diagram $\lambda \subset k\times n-k$, we identify it with a path from the top-left corner to the bottom-right corner of a rectangular grid of squares of height $k$ and length $n-k$ as follows:
begin at the top left corner. As each stage, if you find yourself at the top left corner of a square with a $*$ in it, move one step downwards. If not, move one step to the right.
\begin{figure}[b]
  \centering
  \begin{equation*}
      \begin{xy}
        (0,7.5);
        (5,7.5)**@{.};(10,7.5)**@{.};(15,7.5)**@{.};(20,7.5)**@{.};
        (0,2.5);(5,2.5)**@{.};(10,2.5)**@{.};(15,2.5)**@{.};(20,2.5)**@{.};
        (0,-2.5);(5,-2.5)**@{.};(10,-2.5)**@{.};(15,-2.5)**@{.};(20,-2.5)**@{.};
        (0,-7.5);(5,-7.5)**@{.};(10,-7.5)**@{.};(15,-7.5)**@{.};(20,-7.5)**@{.};
        (0,7.5);(0,2.5)**@{.};(0,-2.5)**@{.};(0,-7.5)**@{.};
        (5,7.5);(5,2.5)**@{.};(5,-2.5)**@{.};(5,-7.5)**@{.};
        (10,7.5);(10,2.5)**@{.};(10,-2.5)**@{.};(10,-7.5)**@{.};
        (15,7.5);(15,2.5)**@{.};(15,-2.5)**@{.};(15,-7.5)**@{.};
        (20,7.5);(20,2.5)**@{.};(20,-2.5)**@{.};(20,-7.5)**@{.};
        (7.5,5)*=0{*};(12.5,5)*=0{*};(17.5,5)*=0{*};
        (12.5,0)*=0{*};(17.5,0)*=0{*};
        (12.5,-5)*=0{*};(17.5,-5)*=0{*};
      \end{xy}
      \quad 
      \longrightarrow
      \begin{xy}
        (0,7.5);
        (5,7.5)**@{-};(10,7.5)**@{.};(15,7.5)**@{.};(20,7.5)**@{.};
        (0,2.5);(5,2.5)**@{.};(10,2.5)**@{-};(15,2.5)**@{.};(20,2.5)**@{.};
        (0,-2.5);(5,-2.5)**@{.};(10,-2.5)**@{.};(15,-2.5)**@{.};(20,-2.5)**@{.};
        (0,-7.5);(5,-7.5)**@{.};(10,-7.5)**@{.};(15,-7.5)**@{-};(20,-7.5)**@{-};
        (0,7.5);(0,2.5)**@{.};(0,-2.5)**@{.};(0,-7.5)**@{.};
        (5,7.5);(5,2.5)**@{-};(5,-2.5)**@{.};(5,-7.5)**@{.};
        (10,7.5);(10,2.5)**@{.};(10,-2.5)**@{-};(10,-7.5)**@{-};
        (15,7.5);(15,2.5)**@{.};(15,-2.5)**@{.};(15,-7.5)**@{.};
        (20,7.5);(20,2.5)**@{.};(20,-2.5)**@{.};(20,-7.5)**@{.};
        (7.5,5)*=0{*};(12.5,5)*=0{*};(17.5,5)*=0{*};
        (12.5,0)*=0{*};(17.5,0)*=0{*};
        (12.5,-5)*=0{*};(17.5,-5)*=0{*};
      \end{xy}
  \end{equation*}
  \caption{From a Ferrers diagram to its path}
  \label{fig:fp}
\end{figure}
In the example with $n=7$ and $k=3$ that we considered in Figure~\ref{fig:2}, this process is illustrated in Figure~\ref{fig:fp}.
Such a path always consists of $n$ segments of unit length, of which $k$ are vertical and $n-k$ are horizontal.
Index the segments of such a path by the numbers $1,\ldots,n$, starting at the top-left corner and ending at the bottom-right corner.
The path is completely determined by specifying which $k$ of these $n$ segments are vertical.
Figure~\ref{fig:1} shows all ten paths for $n=5$ and $k=2$.
Directly beneath each path is listed the subset of $\{1,2,3,4,5\}$ corresponding to the vertical segments.

Since the paths are completely specified by which $k$ of the $n$ segments are vertical, the number of such paths is $\binom n k$.
This gives a more illuminating proof of the identity (\ref{eq:3}):
\begin{equation*}
  \lim_{q\to 1}\qbin n k = \lim_{q\to 1} \sum_{\lambda\subset k\times n-k} q^{|\lambda|} = \sum_{\lambda\subset k\times n-k} 1 = \binom n k.
\end{equation*}
This proof trumps the one in Section~\ref{sec:introduction} by virtue of being combinatorial; in effect, it constructs a surjective function $K$ (call it a collapse) from the set of $k$\dash dimensional subspaces of $F^n$ to the set of subsets of $\{1,\ldots,n\}$ of order $k$ with all pre-images having cardinality a power of $q$.
\begin{figure}
\begin{equation*}
  \begin{array}{ccccc}
    \begin{xy}
      (5,0)**@{-};(10,0)**@{-};(15,0)**@{-};
      (0,-5);(5,-5)**@{.};(10,-5)**@{.};(15,-5)**@{.};
      (0,-10);(5,-10)**@{.};(10,-10)**@{.};(15,-10)**@{.};
      (0,0);(0,-5)**@{.};(0,-10)**@{.};
      (5,0);(5,-5)**@{.};(5,-10)**@{.};
      (10,0);(10,-5)**@{.};(10,-10)**@{.};
      (15,0);(15,-5)**@{-};(15,-10)**@{-};
    \end{xy}
    \:
    &
    \begin{xy}
      (5,0)**@{-};(10,0)**@{-};(15,0)**@{.};
      (0,-5);(5,-5)**@{.};(10,-5)**@{.};(15,-5)**@{-};
      (0,-10);(5,-10)**@{.};(10,-10)**@{.};(15,-10)**@{.};
      (0,0);(0,-5)**@{.};(0,-10)**@{.};
      (5,0);(5,-5)**@{.};(5,-10)**@{.};
      (10,0);(10,-5)**@{-};(10,-10)**@{.};
      (15,0);(15,-5)**@{.};(15,-10)**@{-};
    \end{xy}
    \:
    &
    \begin{xy}
      (5,0)**@{-};(10,0)**@{.};(15,0)**@{.};
      (0,-5);(5,-5)**@{.};(10,-5)**@{-};(15,-5)**@{-};
      (0,-10);(5,-10)**@{.};(10,-10)**@{.};(15,-10)**@{.};
      (0,0);(0,-5)**@{.};(0,-10)**@{.};
      (5,0);(5,-5)**@{-};(5,-10)**@{.};
      (10,0);(10,-5)**@{.};(10,-10)**@{.};
      (15,0);(15,-5)**@{.};(15,-10)**@{-};
    \end{xy}
    \:
    &
    \begin{xy}
      (5,0)**@{.};(10,0)**@{.};(15,0)**@{.};
      (0,-5);(5,-5)**@{-};(10,-5)**@{-};(15,-5)**@{-};
      (0,-10);(5,-10)**@{.};(10,-10)**@{.};(15,-10)**@{.};
      (0,0);(0,-5)**@{-};(0,-10)**@{.};
      (5,0);(5,-5)**@{.};(5,-10)**@{.};
      (10,0);(10,-5)**@{.};(10,-10)**@{.};
      (15,0);(15,-5)**@{.};(15,-10)**@{-};
    \end{xy}
    \:
    &
    \begin{xy}
      (5,0)**@{-};(10,0)**@{-};(15,0)**@{.};
      (0,-5);(5,-5)**@{.};(10,-5)**@{.};(15,-5)**@{.};
      (0,-10);(5,-10)**@{.};(10,-10)**@{.};(15,-10)**@{-};
      (0,0);(0,-5)**@{.};(0,-10)**@{.};
      (5,0);(5,-5)**@{.};(5,-10)**@{.};
      (10,0);(10,-5)**@{-};(10,-10)**@{-};
      (15,0);(15,-5)**@{.};(15,-10)**@{.};
    \end{xy}\\
    & & & & \\
    \{4,5\} & \{3,5\} & \{2,5\} & \{1,5\} & \{3,4\}\\
    & & & & \\
    (12345) & (12435) & (13425) & (23415) & (12534)\\
    & & & & \\
    \begin{xy}
      (5,0)**@{-};(10,0)**@{.};(15,0)**@{.};
      (0,-5);(5,-5)**@{.};(10,-5)**@{-};(15,-5)**@{.};
      (0,-10);(5,-10)**@{.};(10,-10)**@{.};(15,-10)**@{-};
      (0,0);(0,-5)**@{.};(0,-10)**@{.};
      (5,0);(5,-5)**@{-};(5,-10)**@{.};
      (10,0);(10,-5)**@{.};(10,-10)**@{-};
      (15,0);(15,-5)**@{.};(15,-10)**@{.};
    \end{xy}
    \:
    &
    \begin{xy}
      (5,0)**@{.};(10,0)**@{.};(15,0)**@{.};
      (0,-5);(5,-5)**@{-};(10,-5)**@{-};(15,-5)**@{.};
      (0,-10);(5,-10)**@{.};(10,-10)**@{.};(15,-10)**@{-};
      (0,0);(0,-5)**@{-};(0,-10)**@{.};
      (5,0);(5,-5)**@{.};(5,-10)**@{.};
      (10,0);(10,-5)**@{.};(10,-10)**@{-};
      (15,0);(15,-5)**@{.};(15,-10)**@{.};
    \end{xy}
    \:
    &
    \begin{xy}
      (5,0)**@{-};(10,0)**@{.};(15,0)**@{.};
      (0,-5);(5,-5)**@{.};(10,-5)**@{.};(15,-5)**@{.};
      (0,-10);(5,-10)**@{.};(10,-10)**@{-};(15,-10)**@{-};
      (0,0);(0,-5)**@{.};(0,-10)**@{.};
      (5,0);(5,-5)**@{-};(5,-10)**@{-};
      (10,0);(10,-5)**@{.};(10,-10)**@{.};
      (15,0);(15,-5)**@{.};(15,-10)**@{.};
    \end{xy}
    \:
    &
    \begin{xy}
      (5,0)**@{.};(10,0)**@{.};(15,0)**@{.};
      (0,-5);(5,-5)**@{-};(10,-5)**@{.};(15,-5)**@{.};
      (0,-10);(5,-10)**@{.};(10,-10)**@{-};(15,-10)**@{-};
      (0,0);(0,-5)**@{-};(0,-10)**@{.};
      (5,0);(5,-5)**@{.};(5,-10)**@{-};
      (10,0);(10,-5)**@{.};(10,-10)**@{.};
      (15,0);(15,-5)**@{.};(15,-10)**@{.};
    \end{xy}
    \:
    &
    \begin{xy}
      (5,0)**@{.};(10,0)**@{.};(15,0)**@{.};
      (0,-5);(5,-5)**@{.};(10,-5)**@{.};(15,-5)**@{.};
      (0,-10);(5,-10)**@{-};(10,-10)**@{-};(15,-10)**@{-};
      (0,0);(0,-5)**@{-};(0,-10)**@{-};
      (5,0);(5,-5)**@{.};(5,-10)**@{.};
      (10,0);(10,-5)**@{.};(10,-10)**@{.};
      (15,0);(15,-5)**@{.};(15,-10)**@{.};
    \end{xy}\\
    & & & & \\
    \{2,4\} & \{1,4\} & \{2,3\} & \{1,3\} & \{1,2\}\\
    & & & & \\
    (13524) & (23514) & (14523) & (24513) & (34512)
  \end{array}
\end{equation*}
\caption{Paths, subsets and permutations}
\label{fig:1}
\end{figure}
We shall now give a combinatorial interpretation of the number $|\lambda|$.
In order to do this, we associate a permutation $\pi_\lambda$ of $(1,\ldots,n)$ (namely a rearrangement of these symbols) to each Ferrers diagram $\lambda$ contained in an $k\times n-k$ grid.
The permutation $\pi_\lambda$ is constructed as follows: look at the path corresponding to $\lambda$, with its segments indexed by the numbers $1,\ldots,n$ as before.
First write down the indices corresponding to the horizontal segments in increasing order.
Then write down the indices corresponding to the vertical segments in increasing order.

In Figure~\ref{fig:1}, for $n=5$ and $k=3$ the permutation associated to each Ferrers diagram appears below the subset corresponding to the horizontal segments.
Note that only ten of the 120 permutations of $(1,2,3,4,5)$ appear in Figure~\ref{fig:1}.
These ten permutations are characterized by the property that each entry with the possible exception of the third one is smaller than the next one.

More generally, let $(\pi(1),\ldots,\pi(n))$ denote a permutation of $(1,\ldots,n)$.
The descent set of $\pi$ is defined as
\begin{equation*}
  D(\pi)=\{ i\in \{1,\ldots,n-1\}: \pi(i)>\pi(i+1)\}.
\end{equation*}
The permutations $\pi_\lambda$ that correspond to paths in a $k\times n-k$ grid are precisely the ones for which $D(\pi)\subset \{k\}$.

We are now ready to give an interpretation of $|\lambda|$ in terms of $\pi_\lambda$.
For a permutation $\pi$ of $(1,\ldots,n)$, an inversion of $\pi$ is a pair $(i,j)$ such that $1\leq i < j \leq n$ but $\pi(i)>\pi(j)$.
Let $\inv(\pi)$ denote the number of inversions of $\pi$.
We claim that
\begin{equation}
  \label{eq:6}
  |\lambda|=\inv(\pi_\lambda).
\end{equation}
Indeed, if $(i,j)$ is an inversion of $\pi_\lambda$, then since $(\pi_\lambda(1),\ldots,\pi_\lambda(k))$ and $(\pi_\lambda(k+1),\ldots,\pi_\lambda(n))$ are increasing sequences,  we must have $1\leq i\leq k$ and $k<j\leq n$.
The box in the $i$th column and $(j-k)$th row of a Ferrers diagram $\lambda$ in a $k\times n-k$ grid contains a $*$ if and only if the associated path has taken $j-k$ downward steps before it has taken $i$ rightward steps.
But this is precisely when $\pi(i)$ (the index of the $i$th rightward step) is greater than $\pi(j)$ (the index of the $(j-k)$th downward step).
The reader who finds the above reasoning hard to follow should try at first to verify (\ref{eq:6}) for the paths in Figure~\ref{fig:1}.

By passing from $\lambda$ to $\pi_\lambda$, we can rewrite (\ref{eq:4}) in yet another form:
\begin{equation}
  \label{eq:7}
  \qbin n k = \sum_{\{\pi\in \Sigma_n:D(\pi)\subset\{k\}\}} q^{\inv(\pi)}
\end{equation}
Thus the coefficient of $q^r$ in $\qbin n k$ is the number of permutations with $r$ inversions and descent set contained in $\{k\}$.

\subsection{Multinomial coefficients}

Recall that multinomial coefficients given by the formula
\begin{equation}
  \label{eq:8}
  \multinom n{k_1&\ldots&k_m}=\frac{n!}{k_1!\cdots k_m!}
\end{equation}
count the number of ways of writing $\{1,\ldots,n\}$ as a disjoint union $X_1\sqcup\cdots\sqcup X_m$ where $|X_i|=k_i$ for $i=1,\ldots,m$ (of course, one assumes that $k_1+\cdots+k_m=n$).
Let
\begin{equation*}
  \begin{aligned}
    Y_1&=X_1&\\
    Y_2&=X_1\cup X_2&\\
    &\vdots&\\
    Y_{m-1}&=X_1\cup\cdots\cup X_{m-1}&\\
    Y_m&=X_1\cup\cdots\cup X_m&=\{1,\ldots,n\}
  \end{aligned}
\end{equation*}
Let $s_i=k_1+\cdots+k_i$ for each $i\in \{1,\ldots,m-1\}$.
Then $Y_1\subset Y_2\subset\cdots\subset Y_{m-1}$ is a nested sequence of subsets such that $|Y_i|=s_i$ for all $i\in \{1,\ldots,m-1\}$, and it is clear that the sequence of disjoint sets $(X_i)$ can be recovered from the sequence of nested sets $(Y_i)$.
Writing $S$ for an increasing sequence $(s_1,\ldots,s_{m-1})$ in $\{1,\ldots,n\}$, we shall use the notation
\begin{equation*}
  \binom n S=\#\big\{S_1\subset\cdots\subset S_m= \{1,\ldots,n\}:|S_i|=s_i \text{ for } 1\leq i<m \big\}.
\end{equation*}
We have:
\begin{equation*}
  \binom n S=\multinom n{k_1&\cdots&k_m},
\end{equation*}
where $k_i=s_i-s_{i-1}$ for $1,i< m$, $k_1=s_1$ and $k_m=n-s_{m-1}$.
This interpretation of the multinomial coefficients has an obvious analogue for vector spaces and subspaces.
Define
\begin{equation*}
  \binom n S_q = \#\{V_1\subset \cdots\subset V_{m-1}\subset F^n: V_i \text{ is a subspace of dimension }s_i\}.
\end{equation*}
The multinomial coefficients can be expressed in terms of the binomial coefficients.
Clearly, the number of choices for $S_{n-1}$ inside $S_n$ is $\binom n{s_{n-1}}$, the number of choices for $S_{n-2}$ inside each choice of $S_{n-1}$ is $\binom{s_{n-1}}{s_{n-2}}$ and so on, so that
\begin{equation}
  \label{eq:9}
  \binom n S = \binom{s_2}{s_1}\cdots\binom{s_{m-1}}{s_{m-2}}\binom n{s_{m-1}}.
\end{equation}
Similarly,
\begin{equation}
  \label{eq:10}
    \qbin n S = \qbin{s_2}{s_1}\cdots\qbin{s_{m-1}}{s_{m-2}}\qbin n{s_{m-1}}.
\end{equation}
These expressions allow us to easily deduce some properties of $q$-multinomial coefficients from the corresponding properties of $q$-binomial coefficients.
For example, the expression (\ref{eq:5}) expressing a $q$-binomial coefficient as a rational function in $q$ with denominator non-zero except at $q=1$ also shows that a $q$-multinomial coefficient is a rational function of $q$ with denominator non-zero except at $q=1$.
From the identity~(\ref{eq:3}) we can deduce that
\begin{equation}
  \label{eq:multinom}
  \lim_{q\to 1}\qbin n S=\binom n S.
\end{equation}
Similarly, from the identity (\ref{eq:4}) we can deduce that each $q$-multinomial coefficient is a monic polynomial in $q$ of degree $k_1\cdots k_m$ with positive coefficients.
A slightly more careful analysis will allow us to find the analogue of (\ref{eq:7}) for multinomial coefficients.
Not only will it give us a combinatorial interpretation for the coefficient of a power of $q$ in the multinomial coefficient,
when combined with the principle of inclusion and exclusion, it will also have the rather surprising consequence that the alternating sum
\begin{equation*}
  \sum_{S\subset T} (-1)^{|T-S|}\qbin n S
\end{equation*}
(where $|T-S|$ denotes the number of elements of $T$ which are not in $S$) is a polynomial in $q$ with non-negative coefficients.

In order to simplify notation, for any $S\subset \{1,\ldots,n\}$, let
\begin{equation*}
  \Sigma_n(S)=\{\pi \in \Sigma_n \;:\; D(\pi)\subset S\}.
\end{equation*}
Then (\ref{eq:7}) can be rewritten as
\begin{equation*}
  \qbin n k = \sum_{\pi \in \Sigma_n(\{k\})} q^{\inv(\pi)}.
\end{equation*}
If we expand the right hand side of (\ref{eq:10}) using the above identity, we get
\begin{equation}
  \label{eq:12}
  \sum_{\pi_1\in\Sigma_{s_2}(\{s_1\})}
  \:\:
  \cdots
  \:\:
  \sum_{\pi_{m-2}\in \Sigma_{s_{m-1}}(\{s_{m-2}\})}
    \quad
  \sum_{\pi_{m-1}\in \Sigma_n(\{s_{m-1}\})}
    q^{\inv(\pi_1)+\cdots+\inv(\pi_m)}
\end{equation}
We claim that the formula
\begin{equation*}
  \Phi(\pi_{m-1},\pi_{m-2},\ldots,\pi_1)=\pi_{m-1}\circ\pi_{m-2}\circ\cdots\circ\pi_1
\end{equation*}
defines a bijection
\begin{equation*}
  \Phi:\Sigma_{s_2}(\{s_1\})\times \Sigma_{s_3}(\{s_2\})\times \cdots \times \Sigma_{s_{m-1}}(\{s_{m-1}\})\tilde\to\Sigma_n(S)
\end{equation*}
(where $\Sigma_r$ is identified with the subgroup of $\Sigma_{n}$ which permutes the first $r$ elements of $\{1,\ldots,n\}$) 
such that 
\begin{equation*}
  \inv(\Phi(\pi_{m-1},\pi_{m-2},\ldots,\pi_1))=\inv(\pi_{m-1})+\inv(\pi_{m-2})+\cdots+\inv(\pi_1).
\end{equation*}
allowing us to rewrite  (\ref{eq:12}) as
\begin{equation*}
  \qbin n S = \sum_{\pi\in \Sigma_n(S)} q^{\inv(\pi)}.
\end{equation*}
Indeed, using induction on $m$, it suffices to verify
\begin{lemma*}
  The formula
  \begin{equation*}
    (\pi',\pi_{m-1})\mapsto \pi'\circ \pi_{m-1}
  \end{equation*}
  defines a bijection
  \begin{equation*}
    \Sigma_{s_{m-1}}(\{s_1,\ldots,s_{m-2}\})\times \Sigma_n(\{s_{m-1}\})\to \Sigma_n(S)
  \end{equation*}
  such that
  \begin{equation}
    \label{eq:13}
    \inv(\pi_{m-1}\circ\pi')=\inv(\pi_{m-1})+\inv(\pi')
  \end{equation}
  (where $\Sigma_{s_{m-1}}$ is identified with the subgroup of $\Sigma_n$ which permutes the first $s_{m-1}$ elements of $\{1,\ldots,n\}$).
\end{lemma*}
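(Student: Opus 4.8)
Set $r=s_{m-1}$ and let $\pi=\pi_{m-1}\circ\pi'$ denote the product, with the finer permutation $\pi'$ applied first (this is the composition order consistent with $\Phi$ and with the stated inversion identity, so I read the displayed map accordingly). Recall the two shapes involved: an element $\pi_{m-1}\in\Sigma_n(\{r\})$ is increasing on $\{1,\dots,r\}$ and on $\{r+1,\dots,n\}$, hence is completely determined by the $r$-element set $A=\pi_{m-1}(\{1,\dots,r\})$, which it lists increasingly, followed by $A^c$ increasingly; and $\pi'\in\Sigma_r(\{s_1,\dots,s_{m-2}\})$ permutes $\{1,\dots,r\}$, fixes $\{r+1,\dots,n\}$, and is increasing on each block cut out by $s_1,\dots,s_{m-2}$. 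The plan is to check that the product lands in $\Sigma_n(S)$, to write down an explicit two-sided inverse, and to prove the inversion identity by classifying inversions according to where their two indices sit relative to $r$.

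Because $\pi_{m-1}$ is increasing on $\{1,\dots,r\}$, for $i<r$ one has $\pi(i)<\pi(i+1)$ iff $\pi'(i)<\pi'(i+1)$, so the descents of $\pi$ below $r$ are exactly those of $\pi'$; for $i>r$ both indices are fixed by $\pi'$ and $\pi_{m-1}$ is increasing there, so no descent occurs; only the position $r$ remains. Hence $D(\pi)\subseteq\{s_1,\dots,s_{m-2}\}\cup\{r\}=S$. For the inverse, given $\pi\in\Sigma_n(S)$ I would set $A=\{\pi(1),\dots,\pi(r)\}$, let $\pi_{m-1}$ be the unique member of $\Sigma_n(\{r\})$ with first-block image $A$, and put $\pi'=\pi_{m-1}^{-1}\circ\pi$. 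Since $\pi$ is increasing on $\{r+1,\dots,n\}$ with values $A^c$ there, $\pi'$ fixes those positions; since $\pi_{m-1}^{-1}$ is order-preserving on $A$, the descents of $\pi'$ below $r$ agree with those of $\pi$, so $\pi'\in\Sigma_r(\{s_1,\dots,s_{m-2}\})$. This assignment is manifestly inverse to $(\pi',\pi_{m-1})\mapsto\pi_{m-1}\circ\pi'$, giving the bijection.

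For the inversion identity I would split a pair $i<j$ with $\pi(i)>\pi(j)$ into three cases. If $i,j\le r$, then $\pi(i)=\pi_{m-1}(\pi'(i))$ and $\pi(j)=\pi_{m-1}(\pi'(j))$, and since $\pi_{m-1}$ is increasing on $\{1,\dots,r\}$ the pair is an inversion iff $\pi'(i)>\pi'(j)$; these account for exactly $\inv(\pi')$. If $i,j>r$ there are none. If $i\le r<j$, then $\pi(i)\in A$ while $\pi(j)=\pi_{m-1}(j)\in A^c$, and for fixed $j$ the number of qualifying $i$ equals the number of elements of $A$ exceeding $\pi_{m-1}(j)$; summed over $j$ this is the mixed-pair inversion count of $\pi_{m-1}$, which is all of $\inv(\pi_{m-1})$. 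Adding the three contributions gives $\inv(\pi)=\inv(\pi')+\inv(\pi_{m-1})$.

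The one step that deserves care is the mixed case: the count of elements of $A$ exceeding a given value $\pi_{m-1}(j)$ must be seen to be independent of $\pi'$. This holds because $\pi'$ only permutes the first $r$ positions among themselves, so the set of $\pi$-values occupying those positions is still exactly $A$; the internal reordering never changes how many left-hand values beat a fixed right-hand value. Once this invariance is isolated the remaining steps are routine bookkeeping, and induction on $m$ then promotes the lemma to the full factorization of $\Phi$ and to the identity $\qbin n S=\sum_{\pi\in\Sigma_n(S)}q^{\inv(\pi)}$.
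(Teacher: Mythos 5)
Your proof is correct, and it departs from the paper's at both points where real work is required, so a comparison is worthwhile. First, you were right to pin down the composition order: the lemma's displayed map $(\pi',\pi_{m-1})\mapsto\pi'\circ\pi_{m-1}$ is inconsistent with \eqref{eq:13}, and the paper's own proof flip-flops (its first paragraph concludes $\pi'\circ\pi_{m-1}\in\Sigma_n(S)$, while its disjointness condition \eqref{eq:14} treats the product as $\pi_{m-1}\circ\pi'$); only your reading, with $\pi'$ applied first, is tenable --- for $n=3$, $s_1=1$, $s_2=2$, the pairs $(\pi',\pi_{m-1})=(21,231)$ and $(12,132)$ in one-line notation have the same product $132$ in the other order, so that map is not even injective. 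Second, for the identity \eqref{eq:13} the paper proves a small general sublemma: $I(\sigma\circ\tau)\subseteq\tau^{-1}(I(\sigma))\cup I(\tau)$ always, with equality, and hence $\inv(\sigma\circ\tau)=\inv(\sigma)+\inv(\tau)$, whenever $\tau^{-1}(I(\sigma))\cap I(\tau)=\emptyset$; it then checks the disjointness exactly where your key observation sits, namely that an inversion $(i,j)$ of $\pi'$ has $\pi'(j)<\pi'(i)\leq s_{m-1}$, a range on which $\pi_{m-1}$ is increasing. Your direct three-case classification of the inversions of the product by position relative to $r=s_{m-1}$ reaches the same conclusion more concretely: the paper's route buys a reusable criterion for when inversion counts add under composition, while yours makes the count transparent by isolating the invariance of the value set $A=\pi(\{1,\ldots,r\})$ under $\pi'$. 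Most notably, your proof is more complete than the published one: the paper only verifies that the product lies in $\Sigma_n(S)$ and proves \eqref{eq:13}, and never addresses injectivity or surjectivity at all --- nor could bijectivity be obtained there by counting, since $\#\Sigma_n(S)=\binom{n}{S}$ is identity \eqref{eq:11}, which is itself derived from the lemma. Your explicit two-sided inverse (recover $A$, hence the unique $\pi_{m-1}\in\Sigma_n(\{r\})$ with $\pi_{m-1}(\{1,\ldots,r\})=A$, then $\pi'=\pi_{m-1}^{-1}\circ\pi$, checked to fix $\{r+1,\ldots,n\}$ and to have descent set in $\{s_1,\ldots,s_{m-2}\}$) fills that gap cleanly; likewise your descent computation, which uses that $\pi'$ preserves the block $\{1,\ldots,r\}$ on which $\pi_{m-1}$ is increasing, is a tightened version of the paper's brisker claim that functions increasing on the segments compose to a function increasing on the segments --- an inference that, as literally stated, does not hold without the block-preservation you make explicit.
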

\begin{proof}
  Firstly, note that if $\pi'\in\Sigma_{s_{m-1}}(\{s_1,\ldots,s_{m-2}\})$ and $\pi_{m-1}\in\Sigma_n(\{s_{m-1}\})$ then both $\pi'$ and $\pi_{m-1}$ are increasing functions on each of the segments $\{s_{r-1}+1,\ldots,s_r\}$ for $r=1,\ldots,m$, and therefore, their composition is also increasing on these segments.
  Therefore $\pi'\circ\pi_{m-1}\in \Sigma_n(S)$.

  It remains to prove (\ref{eq:13}).
  For each permutation $\pi$, let $I(\pi)$ denote the set of inversions of $\pi$, namely, the set of pairs $i<j$ for which $\pi(i)>\pi(j)$.
  In general, if $\sigma$ and $\tau$ are permutations then
  \begin{equation*}
    I(\sigma\circ\tau)\subset \tau^{-1}(I(\sigma))\cup I(\tau)
  \end{equation*}
  from which it follows that
  \begin{equation*}
    \inv(\sigma\circ\tau)\leq\inv(\sigma)+\inv(\tau)
  \end{equation*}
  If in addition, $\tau^{-1}(I(\sigma))\cap I(\tau)=\emptyset$, then 
  \begin{equation*}
    I(\sigma\circ\tau)= \tau^{-1}(I(\sigma))\cup I(\tau)
  \end{equation*}
  and therefore,
  \begin{equation*}
    \inv(\sigma\circ\tau)=\inv(\sigma)+\inv(\tau)
  \end{equation*}
  Hence in order to prove (\ref{eq:13}), one must show that
  \begin{equation}
    \label{eq:14}
    \pi^{\prime-1}(I(\pi_{m-1}))\cap I(\pi')=\emptyset.
  \end{equation}
  Since $\pi'$ acts only on $\{1,\ldots,s_{m-1}\}$, if $(i,j)$ is an inversion of $\pi'$ then $\pi'(j)<\pi'(i)\leq s_{m-1}$.
  Since $\pi_{m-1}$ has no descents in this range, $(\pi'(i),\pi'(j))$ can not be an inversion of $\pi_{m-1}$, proving (\ref{eq:14}).
\end{proof}
We have succeeded in generalizing (\ref{eq:7}) to multinomial coefficients:
\begin{equation}
  \label{eq:15}
  \qbin n S = \sum_{D(\pi)\subset S} q^{\inv(\pi)}.
\end{equation}
Combining (\ref{eq:multinom}) with (\ref{eq:15})
\begin{equation}
  \label{eq:11}
  \binom n S = \#\{\pi\in \Sigma_n|D(\pi)\subset S\}
\end{equation}
We shall now combine these identity with the principle of inclusion and exclusion to get a surprising positivity result for alternating sums of multinomial coefficients.

\subsection{Principle of inclusion and exclusion}
\label{sec:princ-incl-excl}


This principle is most commonly known as a technique for computing the cardinality of a union of finite sets in terms of cardinalities of various intersections. 
What follows is a more abstract formulation (see \cite[Section~2.1]{MR1442260}).

Let $R$ be a finite set.
The power set $2^R$ of $R$ is the set of all subsets of $R$.
The principle of inclusion and exclusion can be interpreted as saying that two functions $\alpha,\beta:2^R\to \C$ satisfy the identities
\begin{equation}
  \tag{$\spadesuit$}
  \label{eq:1}
  \beta(T)=\sum_{S\subset T} \alpha(S).
\end{equation}
for all $S,T\subset R$ if and only if they satisfy the identities
\begin{equation}
  \tag{$\clubsuit$}
  \label{eq:2}
  \alpha(T)=\sum_{S\subset T} (-1)^{|T-S|} \beta(S)
\end{equation}
for all $S,T\subset R$.
\begin{proof}
  Assume (\ref{eq:1}). 
  Then
  \begin{eqnarray*}
    \sum_{S\subset T}(-1)^{|T-S|}\beta(S) & = & \sum_{S\subset T}(-1)^{|T-S|}\sum_{U\subset S} \alpha(U)\\
    & = & \sum_{U\subset T}\alpha(U)\sum_{U\subset S \subset T} (-1)^{|T-S|}
  \end{eqnarray*}
  If $m=|T-U|>0$, then the inner sum is $\sum_{i=0}^m (-1)^i\binom{m}{i}=0$.
  Therefore, the expression reduces to $\alpha(T)$.
  Conversely, assume (\ref{eq:2}).
  Then
  \begin{eqnarray*}
    \sum_{S\subset T} \alpha(T) & = & \sum_{S\subset T}\sum_{U\subset S}(-1)^{|S-U|}\beta(U)\\
    & = & \sum_{U\subset T} \beta(U) \sum_{U\subset S\subset T} (-1)^{|S-U|},
  \end{eqnarray*}
  and as before, the inner sum is zero unless $U=T$, in which case it is one.
\end{proof}

\subsection{Counting permutations with a given descent set}
\label{sec:count-perm-with}

The principle of inclusion and exclusion allows us to use (\ref{eq:11}) to write a formula for the number of permutations with a given descent set in terms of multinomial coefficients.
Take $R=\{1,\ldots,n\}$ and let
\begin{equation*}
  \alpha(S)=\#\{\pi\in \Sigma_n|D(\pi)=S\}.
\end{equation*}
Then by (\ref{eq:11})
\begin{equation*}
  \beta(T)=\sum_{S\subset T}\alpha(S)=\#\{\pi\in \Sigma_n|D(\pi)\subset T\}=\binom n T.
\end{equation*}
Thus, by the principle of inclusion and exclusion
\begin{equation*}
  \#\{\pi\in \Sigma_n|D(\pi)=T\}=\sum_{S\subset T}(-1)^{|T-S|}\binom n S.
\end{equation*}

\subsection{A surprising positivity result}

Let
\begin{equation*}
  \alpha_q(S)=\sum_{D(\pi)=S} q^{\inv(\pi)}.
\end{equation*}
Then
\begin{eqnarray*}
  \beta_q(T)&=&\sum_{S\subset T}\alpha_q(S)\\
  &=&\sum_{S\subset T}\sum_{D(\pi)=S}q^{\inv(\pi)}\\
  &=&\sum_{D(\pi)\subset T}q^{\inv(\pi)}\\
  &=&\qbin n T
\end{eqnarray*}
The principle of inclusion and exclusion (Section~\ref{sec:princ-incl-excl}) gives
\begin{equation}
  \label{eq:16}
  \sum_{S\subset T}(-1)^{|T-S|}\qbin n S=\sum_{D(\pi)=T}q^{\inv(\pi)}.
\end{equation}
Not only is this alternating sum positive; in fact it is a polynomial in $q$ with positive coefficients!

\subsection{Closing remarks}

The combinatorics of Gaussian binomial coefficients continues to fascinate mathematicians today.
That the binomial coefficients $\binom n k$ increase with $k$ when $k\leq n/2$ and decrease for $k\geq n/2$ is easy to see.
An analogous statement for Gaussian binomial coefficients is also not too difficult.
Such sequences of numbers are called \emph{unimodal}.
Towards the end of Section~\ref{sec:echelon}, we saw that $\qbin nk$ is a polynomial in $q$ with positive coefficients.
The first combinatorial proof of the unimodality of these coefficients was found as late as 1990 by Kathleen M. O'Hara \cite{O'Hara199029} (see also an expository article on O'Hara's proof by Doron Zeilberger \cite{Zeilberger}).

Counting subspaces in a finite dimensional vector space over the field $\Z/p\Z$ is a special case of the problem of counting subgroups inside a finite abelian group.
Every finite abelian group can be written as a product of over a finite set of prime numbers of groups of the form
\begin{equation}
  \label{eq:17}
  A_{p,\lambda}=\Z/p^{\lambda_1}\Z\times \cdots \times \Z/p^{\lambda_l}\Z
\end{equation}
where $\lambda=(\lambda_1\geq\cdots\geq\lambda_l)$ is a sequence of positive integers.

A generalization of the problem of counting $k$-dimensional subspaces in an $n$-dimensional space is the problem of counting subgroups of type $A_{p,\mu}$ inside $A_{p,\lambda}$ for $\mu=(\mu_1\geq\cdots\geq\mu_m)$.
The generalizations of (\ref{eq:5}) and (\ref{eq:3}) have been known for a long time (e.g., Delsarte \cite{MR0025463}) and are related to multiset combinatorics (a multiset is a set where elements are allowed to appear with multiplicities, for example, as in the case of roots of a polynomial).
However, the analogues of (\ref{eq:15}) and (\ref{eq:16}) are more subtle, and can be found in Lynne Butler's beautiful monograph \cite{MR1223236}.

In recent years, techniques from combinatorics have been combined with those from analytic number theory (namely, zeta functions) to study enumeration problems in algebra with great success (see, for example the survey article \cite{ZetaICM06}).
The reader who wishes to start learning the language and basic techniques of modern combinatorics need look no further than Richard P. Stanley's amazing book \cite{MR1442260}.
A slightly different take on some of the contents of this article can be found in Henry Cohn's expository article \cite{CohnAMM}.

\subsection*{Acknowledgements}

I benefited from discussions with Kunal Dutta, Sanoli Gun and Shailesh Shirali while preparing this article. It is a pleasure to thank them.


\begin{thebibliography}{10}
\bibitem{BerndtEvans}
B.~C. Berndt and R.~J. Evans.
\newblock The determination of {G}auss sums.
\newblock {\em Bull. Amer. Math. Soc. (N.S.)}, 5(2):107--129, 1981.

\bibitem{MR1223236}
L.~M. Butler.
\newblock Subgroup lattices and symmetric functions.
\newblock {\em Mem. Amer. Math. Soc.}, 112(539):vi+160, 1994.

\bibitem{CohnAMM}
H.~Cohn.
\newblock Projective geometry over $\mathbb{F}_1$ and the gaussian binomial coefficients.
\newblock {\em Amer. Math. Monthly}, 111(6):487--495, 2004.

\bibitem{MR0025463}
S.~Delsarte.
\newblock Fonctions de {M}\"obius sur les groupes abeliens finis.
\newblock {\em Ann. of Math. (2)}, 49:600--609, 1948.

\bibitem{ZetaICM06}
M.~du~Sautoy and F.~Grunewald.
\newblock Zeta functions of groups and rings.
\newblock In {\em Proceedings of the International Congress of Mathematicians}, pages 131--149, Madrid, Spain, 2006.

\bibitem{Galois-fields}
E.~Galois.
\newblock Sur la th\'eorie des nombres.
\newblock {\em Bull. Sci. Math.}, XIII:428, 1830.
\newblock Reprinted in \emph{\OE vres math\'ematiques d'\'Evariste Galois},
  Soci\'ete Math\'ematiques de France, 1897.

\bibitem{summatio-quarumdam}
C.~F. Gauss.
\newblock Summatio quarumdam serienum singularium.
\newblock {\em Comment. Soc. R. Scient. Gottingensis Rec}, 1811.
\newblock Reprinted in \emph{Werke}, volume 2, K\"onigliche Gesellschaft der
  Wissenschaften, G\"ottingen, 1863, pages 9--46; also available from from the
  Gottinger Digitalisierungszentrum at
  \texttt{http://gdz.sub.uni-goettingen.de}.

\bibitem{MR0270933}
D.~E. Knuth.
\newblock Subspaces, subsets, and partitions.
\newblock {\em J. Combinatorial Theory Ser. A}, 10:178--180, 1971.

\bibitem{Moore-finite-fields}
E.~H. Moore.
\newblock {\em The subgroups of the generalized finite modular group}.
\newblock Decennial Publications, Chicago, 1903.

\bibitem{O'Hara199029}
K.~M. O'Hara.
\newblock Unimodality of {G}aussian coefficients: A constructive proof.
\newblock {\em J. Combinatorial Theory Ser. A}, 53(1):29 -- 52,
  1990.

\bibitem{MR1442260}
R.~P. Stanley.
\newblock {\em Enumerative combinatorics. {V}ol. 1}, volume~49 of {\em
  Cambridge Studies in Advanced Mathematics}.
\newblock Cambridge University Press, Cambridge, 1997.
\newblock With a foreword by Gian-Carlo Rota, Corrected reprint of the 1986
  original.

\bibitem{Zeilberger}
D.~Zeilberger.
\newblock Kathy O'Hara's constructive proof of the unimodality of the Gaussian polynomials.
\newblock {\em Amer. Math. Monthly}, 96(7):590--602, 1989.
\end{thebibliography}
\end{document}